\definecolor{webgreen}{rgb}{0,.5,0}
\definecolor{webbrown}{rgb}{.6,0,0}
\newcommand{\seqnum}[1]{\href{https://oeis.org/#1}{\rm \underline{#1}}}
\def\Zee{\mathbb{Z}}
\def\suchthat{\, : \,}
\def\@fnsymbol#1{\ensuremath{\ifcase#1\or *\or \mathsection\or \mathparagraph\or \|\or **\or \ddagger\ddagger \or \dagger\dagger \else\@ctrerr\fi}}
\title{New properties of the $\varphi$-representation of integers}
\author{Jeffrey Shallit\footnote{Research supported by NSERC grant 2024-03725.} \ and Ingrid Vukusic$^*$\footnote{Author's current address:  Department of Mathematics,
University of York,
York, North Yorkshire YO10 5GH,
United Kingdom.  E-mail
\href{mailto:ingrid.vukusic@york.ac.uk}{\tt ingrid.vukusic@york.ac.uk}.}\\
School of Computer Science\\
University of Waterloo\\
Waterloo, ON N2L 3G1 \\
Canada\\
\href{mailto:shallit@uwaterloo.ca}{\tt shallit@uwaterloo.ca}}
\begin{document}

\maketitle

\theoremstyle{plain}
\newtheorem{theorem}{Theorem}
\newtheorem{corollary}[theorem]{Corollary}
\newtheorem{lemma}[theorem]{Lemma}
\newtheorem{proposition}[theorem]{Proposition}

\theoremstyle{definition}
\newtheorem{definition}[theorem]{Definition}
\newtheorem{example}[theorem]{Example}
\newtheorem{conjecture}[theorem]{Conjecture}

\theoremstyle{remark}
\newtheorem{remark}[theorem]{Remark}

\begin{abstract}
We prove a few new properties of the $\varphi$-representation of integers, where $\varphi = (1+\sqrt{5})/2$.  In particular, we prove a 2012 conjecture of Kimberling.  As software assistants, we used the {\tt Walnut} theorem-prover, and in one proof, {\tt ChatGPT 5}.
\end{abstract}

\section{Introduction}

Let $\varphi = (1+\sqrt{5})/2$ be the golden ratio.  Bergman \cite{Bergman:1957} showed that every positive integer $x$ can be represented as a finite sum of distinct (possibly negative) powers of $\varphi$; we write
$x = \sum_{a \leq i \leq b} e_i \varphi^i$ for $a, b \in \Zee$ with
$a \leq b$ and $e_i \in \{0,1\}$.  Furthermore, this representation is unique, provided that $e_i e_{i-1} = 0$ for $a+1 \leq i\leq b$.  Also see \cite{Rousseau:1995}.

More generally, R\'enyi \cite{Renyi:1957} proved an analogous result for representing arbitrary positive real numbers.   Here the representation could be infinite, and to get uniqueness we have to impose the further rule
that there is no index $j$ such that
    $e_{j-2i} = 1$, $e_{j-2i-1} = 0$ for all $i \geq 0$.

\begin{example}
Here are a few examples of $\varphi$-representations.
\begin{align*}
    5 &= \varphi^3 + \varphi^{-1} + \varphi^{-4} \\
    1/5 &= \sum_{i \geq 0} (\varphi^{-20i-4} + \varphi^{-20i-7} +
    \varphi^{-20i-9} +
    \varphi^{-20i-11} +
    \varphi^{-20i-14} +
    \varphi^{-20i-17} ) \\
    \sqrt{5} &= \varphi^1 + \varphi^{-1} \\
    \pi &= \varphi^2 + \varphi^{-2} +
    \varphi^{-5} + \varphi^{-7} + \varphi^{-9} + \varphi^{-12} +
    \varphi^{-16} + 
    \varphi^{-18} +
    \varphi^{-20} +
    \varphi^{-22} + 
    \varphi^{-28} + \cdots 
\end{align*}
\end{example}
For more papers discussing this representation of real numbers, see
\cite{Rousseau:1995,Frougny&Sakarovitch:1999,Sanchis&Sanchis:2001,Dekking:2020,Dekking:2020b,Dekking:2021,Dekking&vanLoon:2023,Dekking:2024,Dekking&vanLoon:2024}.  In fact, $\varphi$-representation is a special case of a much more general numeration system, called $\beta$-numeration.  See, for example, \cite{Renyi:1957,Parry:1960,Bertrand-Mathis:1989,Frougny:1992d,Frougny&Solomyak:1992}.

If a $\varphi$-representation is finite, we can write it as a binary string with a decimal point, in the form
$e_b \cdots e_1 e_0 . e_{-1} e_{-2}  \cdots e_{a}$
representing $\sum_{a \leq i \leq b} e_i \varphi^i$.
If $e_b \not=0$ and $e_a \not=0$, we call this
representation {\it canonical}.  We call $e_b \cdots e_1 e_0$ the {\it positive part}
and $e_{-1} e_{-2} \cdots e_a$ the {\it negative part}, referring to the exponents that appear.
Thus, for example,
the canonical representation of $5$ is the string $1000.1001$.  

Finite $\varphi$-representations correspond
exactly to the non-negative members of $\Zee[\varphi]$.  This was mentioned in Bergman \cite{Bergman:1957} and generalized in \cite{Frougny&Solomyak:1992}. 
In this note we establish some novel properties of the $\varphi$-representations of natural numbers.  We use both ``conventional'' arguments and the {\tt Walnut} theorem-prover.

\section{Shevelev's set and Kimberling's conjecture}

In 2010, Vladimir Shevelev introduced an interesting set $S$ of natural numbers 
$$\{ 1,3,4,7,8,10,11,18,19,21,22,25,26,28,29,47,\ldots\}$$ based on $\varphi$-representation, as  sequence \seqnum{A178482} in the {\it On-Line Encyclopedia of Integer Sequences}
(OEIS) \cite{oeis}.   This set consists of those integers whose $\varphi$-representation is {\it anti-palindromic}; that is, $t$ appears as an exponent in the
$\varphi$-representation if and only if $-t$ also appears.  As an example, 
$25 = \varphi^6 + \varphi^4 + \varphi^{-4} + \varphi^{-6}$ belongs to $S$.

In the entry for \seqnum{A178482}, Clark Kimberling conjectured in 2012 that $S$ consists precisely of those integers $n$
such that if we take the 
$\varphi$-representation of $n$
and replace each exponent $t$ appearing in it by $2t$, then
the result is an integer as well.  For
example, $10 \in S$ and
its $\varphi$-representation
is $\varphi^4 + \varphi^2 + \varphi^{-2} + \varphi^{-4}$.  When
we replace each exponent $t$ by $2t$,
we get $\varphi^8 + \varphi^4 + \varphi^{-4} + \varphi^{-8} = 54$.
On the other hand, $9 \not\in S$.
Its $\varphi$-representation is
$\varphi^4 + \varphi^1 + \varphi^{-2} + \varphi^{-4}$; when
we double the exponents, we get
$\varphi^8 + \varphi^2 + \varphi^{-4} + \varphi^{-8} = 52 - \sqrt{5}$, which is not an integer.

In this section we prove Kimberling's conjecture.
First, we need two lemmas.
\begin{lemma}
Suppose the $\varphi$-expansion of a non-negative real number $x$ is antipalindromic.  Then $x$ is an integer if and only if all the exponents appearing in its expansion are even.
\label{one}
\end{lemma}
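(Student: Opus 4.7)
The plan is to expand $x$ explicitly using the antipalindromic structure and then separate the rational part from the $\sqrt{5}$-part. Let $E \subseteq \mathbb{Z}$ denote the set of exponents appearing in the $\varphi$-expansion of $x$, and let $E^+ = \{t \in E : t > 0\}$. Antipalindromicity means $E = -E$, so
\[
x \;=\; \sum_{t \in E} \varphi^t \;=\; [0 \in E] \,+\, \sum_{t \in E^+} \bigl(\varphi^t + \varphi^{-t}\bigr).
\]

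The key identity is the evaluation of $\varphi^t + \varphi^{-t}$ via the Binet-like formulas. Writing $\varphi^n = (L_n + F_n\sqrt{5})/2$ and $\varphi^{-n} = (-1)^n(L_n - F_n\sqrt{5})/2$, a short computation gives
\[
\varphi^t + \varphi^{-t} \;=\; \begin{cases} L_t, & t \text{ even}, \\ F_t\sqrt{5}, & t \text{ odd}. \end{cases}
\]
Substituting, I would obtain
\[
x \;=\; \Bigl([0 \in E] + \sum_{\substack{t \in E^+ \\ t \text{ even}}} L_t\Bigr) \;+\; \Bigl(\sum_{\substack{t \in E^+ \\ t \text{ odd}}} F_t\Bigr)\sqrt{5}.
\]

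Since $\sqrt{5}$ is irrational, $x$ is an integer if and only if the coefficient of $\sqrt{5}$ vanishes, i.e.\ $\sum_{t \in E^+,\, t \text{ odd}} F_t = 0$. Because $F_t \geq 1$ for every $t \geq 1$, the sum vanishes if and only if there are no odd positive $t \in E^+$; by antipalindromicity, this is equivalent to saying there are no odd exponents in $E$ at all. (Note that $0$ counts as even, so the constant term $[0 \in E]$ causes no trouble.) This gives both directions of the claimed equivalence.

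I expect no real obstacle here: the heart of the argument is the identity for $\varphi^t + \varphi^{-t}$, which is a standard consequence of the Binet formula together with $\varphi \psi = -1$, and the rest is just separating rational and irrational parts and invoking $F_t > 0$. The only minor care needed is to remember that $0 \in E$ is permitted and contributes an integer, so it does not spoil the "all exponents even" characterization.
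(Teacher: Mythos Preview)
Your proof is correct and follows essentially the same route as the paper: both arguments pair $\varphi^t$ with $\varphi^{-t}$ via antipalindromicity, invoke the identity $\varphi^t+\varphi^{-t}=L_t$ for even $t$ and $F_t\sqrt{5}$ for odd $t$, and then separate the integer part from the $\sqrt{5}$-part. The only cosmetic difference is that the paper derives the identity via $\overline{\varphi}=-\varphi^{-1}$ rather than the formula $\varphi^n=(L_n+F_n\sqrt{5})/2$, and you are slightly more explicit about why the Fibonacci sum vanishes only when there are no odd terms.
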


\begin{proof}
Define $\overline{\varphi} = -\varphi^{-1} = (1-\sqrt{5})/2$.   Recall that the Fibonacci numbers $F_i$ are defined
by $(\varphi^i - \overline{\varphi}^i)/\sqrt{5}$ and
the Lucas numbers $L_i$ are defined
by $\varphi^i + \overline{\varphi}^i$.
Both of these sequences consist of integers.  

If the $\varphi$-representation of $x$ is antipalindromic, we can write write the
set $\{ i \suchthat e_i(n) = 1 \}$
as the disjoint union of four sets
$${\cal E}, {\cal O}, 
-{\cal E}, -{\cal O}$$
and possibly also $\{0\}$,
where $\cal E$ is a set of even positive integers and $\cal O$ is a set of odd positive integers.
Notice that if $i$ is even, then
$\varphi^i + \varphi^{-i} = L_i$.
On the other hand, if $i$ is odd,
then $\varphi^i + \varphi^{-i} =
\varphi^i - \overline{\varphi}^i =
F_i \sqrt{5}$.
Thus we can express $x$ as the sum of
certain Lucas numbers (corresponding to ${\cal E}$ and $-{\cal E}$), plus $\sqrt{5}$ times the sum of certain Fibonacci numbers (corresponding to
${\cal O}$ and $-{\cal O}$), and possibly $1$ (if $0$ is present as an exponent).   Thus $x$ is an integer if and only if ${\cal O}$ is empty, which corresponds to all exponents in $x$'s $\varphi$-representation being even.
\end{proof}

\begin{lemma}
Suppose the $\varphi$-expansion of a non-negative real number $x$ has all even exponents.   Then $x$ is an integer if and only if its $\varphi$-expansion is antipalindromic.
\label{two}
\end{lemma}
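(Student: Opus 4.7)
The plan is to express $x$ explicitly as an element of $\Zee[\varphi] = \Zee + \Zee\varphi$ using the identity $\varphi^n = F_n\varphi + F_{n-1}$ (valid for all $n \in \Zee$ under the convention $F_{-n} = (-1)^{n+1}F_n$), and then exploit the irrationality of $\varphi$: if $x$ is an integer, the coefficient of $\varphi$ in this representation must vanish. The converse direction (antipalindromic plus all-even exponents $\Rightarrow$ integer) is immediate from Lemma~\ref{one}, so all the work lies in the direction integer $\Rightarrow$ antipalindromic.

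First I would write
\[
x = e_0 + \sum_{j > 0} e_{2j}\varphi^{2j} + \sum_{j > 0} e_{-2j}\varphi^{-2j}
\]
and substitute $\varphi^{2j} = F_{2j}\varphi + F_{2j-1}$ together with $\varphi^{-2j} = -F_{2j}\varphi + F_{2j+1}$. Collecting terms, the coefficient of $\varphi$ in $x$ is
\[
B = \sum_{j > 0} (e_{2j} - e_{-2j})\, F_{2j}.
\]
Since $x \in \Zee$ forces $B = 0$, the remaining task will be to show that any identity $\sum_{j > 0} a_j F_{2j} = 0$, with finitely many nonzero $a_j \in \{-1,0,1\}$, forces every $a_j = 0$. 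Once established, this gives $e_{2j} = e_{-2j}$ for all $j > 0$, and since all odd-indexed digits vanish by hypothesis, the $\varphi$-expansion is antipalindromic.

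The hard part will be the uniqueness step for signed combinations of even-indexed Fibonacci numbers. Unlike the classical Zeckendorf setting, the ``no consecutive ones'' rule imposes no restriction between adjacent even indices here, so unique Zeckendorf representation does not apply directly. My approach is to use the telescoping identity $\sum_{j=1}^{n} F_{2j} = F_{2n+1} - 1$, which I would verify by a short induction. Letting $k$ be the largest index with $a_k \neq 0$, this identity yields
\[
\Bigl|\sum_{j < k} a_j F_{2j}\Bigr| \le \sum_{j=1}^{k-1} F_{2j} = F_{2k-1} - 1 < F_{2k},
\]
and therefore $|B| \ge F_{2k} - (F_{2k-1} - 1) > 0$, contradicting $B = 0$. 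This will complete the argument.
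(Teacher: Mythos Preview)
Your argument is correct, but it takes a genuinely different route from the paper.  The paper handles the forward direction by a comparison trick: given $x$ with all even exponents, it forms the antipalindromic number $n$ whose positive exponents agree with those of $x$ and whose negative exponents are their mirrors; Lemma~\ref{one} makes $n$ an integer, and then the geometric bound $\sum_{i\ge1}\varphi^{-2i}=\varphi^{-1}<1$ forces $|n-x|<1$, hence $n=x$.  Your approach instead works algebraically in $\Zee[\varphi]$: you expand $\varphi^{\pm 2j}$ via $\varphi^n=F_n\varphi+F_{n-1}$, read off the $\varphi$-coefficient $B=\sum_{j>0}(e_{2j}-e_{-2j})F_{2j}$, and use irrationality of $\varphi$ to get $B=0$; the telescoping bound $\sum_{j=1}^{k-1}F_{2j}=F_{2k-1}-1<F_{2k}$ then shows a nontrivial signed combination of even-index Fibonacci numbers cannot vanish.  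The paper's route is shorter once Lemma~\ref{one} is in hand and sidesteps Fibonacci identities entirely; your route is more self-contained and makes the mechanism (linear independence of $1,\varphi$ over $\mathbb{Q}$) completely explicit.  One small point worth stating in your write-up: you use that the expansion of $x$ is finite when $x$ is an integer (so the sums are finite and $x\in\Zee[\varphi]$), which the paper records just before Section~2 but which you invoke only implicitly.
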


\begin{proof}
First, suppose $x$ is an integer whose $\varphi$-expansion consists of all even exponents.   Let ${\cal E}_{\text{pos}}$ be the set of positive exponents and
${\cal E}_{\text{neg}}$ be the set of negative exponents.  Consider the number
$n$ whose $\varphi$-representation
consists of powers of $\varphi$ in ${\cal E}_{\text{pos}}$ and
$-{\cal E}_{\text{pos}}$, and possibly $\{0\}$ if $0$ is present as an exponent in the expansion of $x$.  By Lemma~\ref{one} we see that $n$ is an integer.   Furthermore
$0 \leq |n - x| < \varphi^{-2} + \varphi^{-4} + \varphi^{-6} + \cdots = \varphi^{-1} < 1$.
Since $n$ is an integer, we must
have $n = x$ and its expansion
consists of the exponents ${\cal E}_{\text{pos}}$,
$-{\cal E}_{\text{pos}}$, and possibly $0$.  So it is antipalindromic.

The converse implication follows directly from Lemma~\ref{one}.
\end{proof}

\begin{remark}
By combining Lemmas~\ref{one} and \ref{two}, we get that the $\varphi$-expansion of an integer $n$ is antipalindromic if and only if all exponents are even.
\end{remark}

We can now prove Kimberling's conjecture.
\begin{theorem}
The set $S$ consists of those natural numbers
$n$ such that
$\sum_{a \leq i \leq b} e_i(n) \varphi^{2i}$
is an integer,
where $n = \sum_{a \leq i \leq b} e_i(n) \varphi^i$ is the 
$\varphi$-representation of $n$.
\end{theorem}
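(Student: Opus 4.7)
The plan is to reduce Kimberling's conjecture to Lemmas~\ref{one} and \ref{two} by a simple bookkeeping argument. Write the canonical $\varphi$-representation of $n$ as $n = \sum_{i \leq t} e_i(n) \varphi^i$, and let $E = \{i \suchthat e_i(n) = 1\}$ be its exponent set. Set $y = \sum_{i \in E} \varphi^{2i}$, the value obtained by doubling all exponents. Since multiplication by $2$ commutes with negation, the condition $E = -E$ (i.e.\ $n \in S$) is equivalent to $2E = -2E$.

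Before I can invoke the lemmas on $y$, I would verify that $\sum_{i \in E} \varphi^{2i}$ is itself the canonical $\varphi$-representation of $y$. This is where rules (1) and (2) from the introduction come in. For rule (1): since $n$'s representation is canonical, consecutive elements of $E$ differ by at least $2$, so consecutive elements of $2E$ differ by at least $4$, and in particular no two consecutive exponents appear. For rule (2): it holds vacuously because every natural number has a finite $\varphi$-representation (as noted in the introduction), so $E$, and hence $2E$, is finite, leaving no infinite tail to violate the condition.

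With canonicity of the doubled representation in hand, both directions follow at once. If $n \in S$, then $E = -E$, so $2E = -2E$; thus the $\varphi$-representation of $y$ is antipalindromic, and all its exponents are even, so Lemma~\ref{one} yields that $y$ is an integer. Conversely, if $y$ is an integer, then its canonical $\varphi$-representation has all even exponents by construction, so Lemma~\ref{two} gives that it is antipalindromic, i.e.\ $2E = -2E$, whence $E = -E$ and $n \in S$. The only genuinely substantive step is the canonicity check in the second paragraph; once that is in place, Lemmas~\ref{one} and \ref{two} supply both halves of the equivalence directly.
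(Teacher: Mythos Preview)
Your proof is correct and follows essentially the same approach as the paper's: both directions are handled by invoking Lemma~\ref{one} for the forward implication and Lemma~\ref{two} for the converse, via the observation that doubling exponents preserves antipalindromicity. Your explicit verification that the doubled representation is itself canonical (so that Lemmas~\ref{one} and \ref{two} apply to it) is a point of rigor the paper leaves implicit.
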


\begin{proof}
Let $\sum_{a \leq i \leq b} e_i \varphi^i$ be the $\varphi$-representation of an integer $n$, and define
$x = \sum_{a \leq i \leq b} e_i \varphi^{2i}$.

Suppose $n \in S$.  Then its $\varphi$-representation is antipalindromic. Then $x$ is also 
antipalindromic, and all exponents
are even.  So by Lemma~\ref{one}, the real number $x$ is an integer.

Now suppose $x$ is an integer.  By the definition of $x$, all the exponents appearing in its $\varphi$-representation are even.  So by Lemma~\ref{two}, we see that $x$ is antipalindromic.  Hence so is $n$, and thus $n \in S$.
\end{proof}

\section{Other results}

In this section we prove a few other novel results about $\varphi$-representations of the natural numbers.
One tool that we use is {\tt Walnut}, a free, open-source prover for first-order statements about automatic
    sequences \cite{Mousavi:2021,Shallit:2023}.   In the paper 
    \cite{Shallit:2025}, {\tt Walnut} code is developed for manipulating the $\varphi$-expansions of natural numbers.

To explain what {\tt Walnut} can do and how it does it, we need to talk about the Zeckendorf (or Fibonacci) representation of natural numbers.  In this system \cite{Lekkerkerker:1952,Zeckendorf:1972} we write a natural number $n$ as a sum of distinct Fibonacci numbers $F_i$ for $i \geq 2$.  This representation is unique, provided we never use two adjacent Fibonacci numbers in it.  We abbreviate such a representation as a binary string
$b_t b_{t-1} \cdots b_2$ for the integer
$[b_t b_{t-1} \cdots b_2]_F := \sum_{2 \leq i \leq t} b_i F_i$; it is called {\it canonical} if $b_t= 1$. Thus, for example, $11 = F_6 + F_4$ is represented by the binary string
$10100$.   

In the paper \cite{Shallit:2025}, the first author showed how to construct a $39$-state finite automaton ${\tt saka}$ that takes three same-length binary strings in parallel as input, say $w,y,z$,
and accepts if $[w]_F$ has $\varphi$
representation $y.z^R$, where $z^R$ denotes the reversal of the string $z$. (The automaton {\tt saka} can be downloaded from \url{https://cs.uwaterloo.ca/~shallit/Papers/allcode.tar}.) In order to force the strings to have the same length, we pad the shorter of $w$, $y$, and $z$ with leading zeroes, if needed.
Furthermore, any number of leading zeroes is allowed.
Another way to think of this representation $y.z^R$ is
as a concatenation of the blocks $[0,0],[0,1],[1,0],[1,1]$, where the first components spell out $y$
and the second components spell out $z$.  We call this the {\it folded representation}.

Although the automaton {\tt saka} takes binary strings as input, in {\tt Walnut} we can call the automaton with numerical inputs, and sometimes this is quite useful.  
For example, if we write
\begin{verbatim}
eval check "?msd_fib $saka(9,10,7)":
\end{verbatim}
in {\tt Walnut}, this is asking whether $10010.0101$
is the $\varphi$-representation of $9$ (since
$[10]_F = 10010$ and $[7]_F = 1010$). Here {\tt Walnut} answers {\tt TRUE}.

We briefly recap some of the features of {\tt Walnut}.  For more information about it, see \cite{Shallit:2023}.  
\begin{itemize}
    \item The notation {\tt msd\_fib} says that all integers in the expression are represented in Zeckendorf representation.
    \item We can add and subtract integers in {\tt Walnut}, and multiply and divide by integer constants, but we cannot multiply two variables.
    \item The {\tt reg} command allows one to define an automaton using a regular expression.
    \item The symbol {\tt \&} means logical {\tt AND}, {\tt |} means logical {\tt OR}, {\tt =>} is logical implication, {\tt <=>} is logical {\tt IFF},
    {\tt \char'176} is logical {\tt NOT}.
    \item The letter {\tt A} is the universal quantifier $\forall$ and the letter {\tt E} is the existential quantifier $\exists$.
    \end{itemize}

\begin{example}
The following {\tt Walnut} code generates an automaton accepting those pairs $(x,y)$ such that $x.y^R$ is the $\varphi$-representation of some integer $n \geq 0$.  The automaton is depicted in Figure~\ref{fig1}.
\begin{verbatim}
def sk "?msd_fib En $saka(n,x,y)":
\end{verbatim}
\begin{figure}[htb]
\begin{center}
    \includegraphics[width=6.75in]{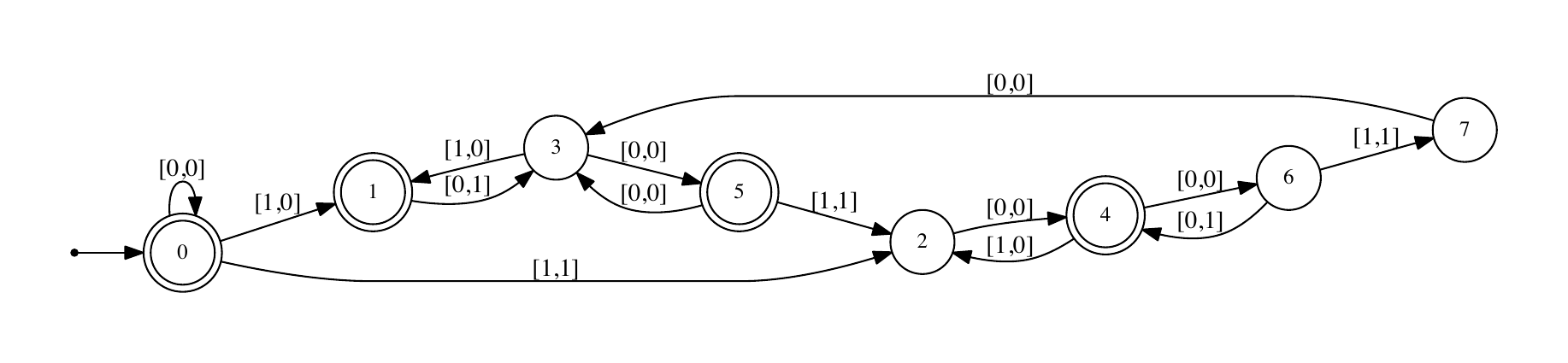}
\end{center}
\caption{Automaton accepting $(x,y)$ such that $x.y^R$ is the $\varphi$-representation of some integer $n \geq 0$.}
\label{fig1}
\end{figure}
\end{example}

As a warmup let us prove the following result.
\begin{proposition}
Let $n$ be a positive integer with canonical
representation $\sum_{a \leq i \leq b} e_i \varphi^i$.
If $b$ is odd then $a = -(b+1)$, and if $b$ is even then $a = -b$.
\label{simple}
\end{proposition}

\begin{proof}
In terms of the folded representation this means that for $n \geq 1$,
an even-length representation must begin with $[1,1]$,
and for $n \geq 2$ an odd-length representation must begin with
$[1,0][0,1]$.  We can verify this with {\tt Walnut} as follows:
\begin{verbatim}
reg start11 msd_fib msd_fib "[0,0]*[1,1]([0,0]|[0,1]|[1,0]|[1,1])*":
reg start1001 msd_fib msd_fib "[0,0]*[1,0][0,1]([0,0]|[0,1]|[1,0]|[1,1])*":
reg evenl msd_fib msd_fib "[0,0]*([0,1]|[1,0]|[1,1])
(([0,0]|[0,1]|[1,0]|[1,1])([0,0]|[0,1]|[1,0]|[1,1]))*([0,0]|[0,1]|[1,0]|[1,1])":
reg oddl msd_fib msd_fib "[0,0]*([0,1]|[1,0]|[1,1])
(([0,0]|[0,1]|[1,0]|[1,1])([0,0]|[0,1]|[1,0]|[1,1]))*":
eval check_num_even "?msd_fib An,x,y (n>=1 & $saka(n,x,y) & $evenl(x,y)) =>
$start11(x,y)":
eval check_num_odd "?msd_fib An,x,y (n>=2 & $saka(n,x,y) & $oddl(x,y)) => 
$start1001(x,y)":
\end{verbatim}
And {\tt Walnut} returns {\tt TRUE} for both.
\end{proof}

We have seen that the set $S$ consists of those integers $n$ whose
$\varphi$-representation consists of all even powers.
We can find an automaton accepting the set $S$ as follows:
\begin{verbatim}
reg shiftr {0,1} {0,1} "([0,0]|[1,0][1,1]*[0,1])*(()|[1,0][1,1]*)":
# accepts x,y such that y is the right shift of the bits of x
def a178482 "?msd_fib Ex,y $saka(n,x,y) & $shiftr(x,y)":
\end{verbatim}
It is depicted in Figure~\ref{fig2}.
\begin{figure}[htb]
\begin{center}
    \includegraphics[width=6.75in]{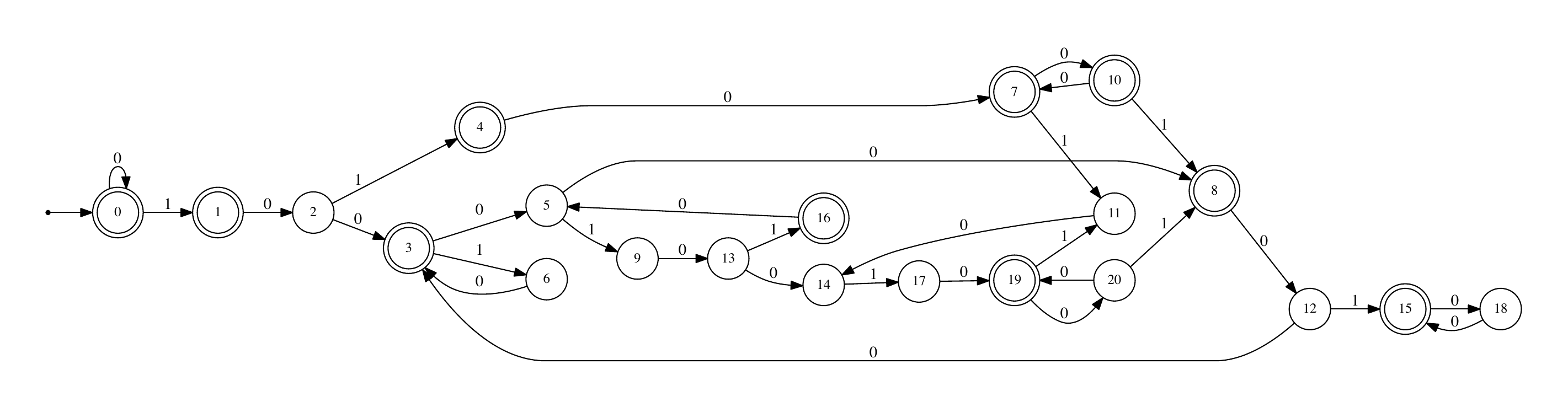}
\end{center}
\caption{Automaton accepting the Zeckendorf representations of the members of sequence \seqnum{A178482}.}
\label{fig2}
\end{figure}

This raises the question of whether there are any integers whose $\varphi$-representation consists of all odd powers.  This is answered (negatively) in the following version of a theorem of Dekking \cite[Thm.~9]{Dekking:2024}.  It says that the negative parts of canonical $\varphi$-representations consist of all binary strings without two consecutive $1$'s that end with a $1$ and whose length is even.
\bigbreak
\begin{theorem}\label{thm:last_index}
\leavevmode
\begin{itemize}
    \item[(a)] 
Suppose $n \geq 2$ is a positive integer with canonical 
$\varphi$-representation $x.y$ for strings $x,y$.
Then $|y|$ is positive and even.

    \item[(b)]  Let $y$ be the canonical Zeckendorf representation of some positive integer such that $|y|$ is even.  Then there exists an integer
    $n$ and a string $x$ such that the $\varphi$-representation of $n$ is $n = x.y^R$.
    
\item[(c)] 
If $n \geq 2$ and $L_{2i-1} < n \leq L_{2i+1}$, then the smallest exponent in the $\varphi$-representation of $n$ 
is $-2i$.
\end{itemize}
\end{theorem}

\begin{proof}
    We use {\tt Walnut}.  For claim (a), we use the following code:
\begin{verbatim}
reg largesteven msd_fib "0*1(0|1)((0|1)(0|1))*":
eval thm_a "?msd_fib An,x,y (n>=2 & $saka(n,x,y)) => $largesteven(y)":
\end{verbatim}
And {\tt Walnut} returns {\tt TRUE}.

For claim (b), we write
\begin{verbatim}
eval thm_b "?msd_fib Ay Ez ($largesteven(y) & y=z) => En,x $saka(n,x,z)":
\end{verbatim}
and once again we get {\tt TRUE}.

For claim (c), we recall the formula $L_n = F_{n-1} + F_{n+1}$ for $n\geq 1$, as well as the equalities $L_0 = F_3$ and $L_2 = F_4$. We use the
following code:
\begin{verbatim}
reg isevenlucas msd_fib "0*10|0*100|0*1010(00)*":
# is x an even-indexed Lucas number?
reg shiftl {0,1} {0,1} "([0,0]|[0,1][1,1]*[1,0])*":
# accepts x,y such that y is the left shift of the bits of x
reg shiftr {0,1} {0,1} "([0,0]|[1,0][1,1]*[0,1])*(()|[1,0][1,1]*)":
# accepts x,y such that y is the right shift of the bits of x
reg fibluc msd_fib msd_fib "[0,0]*([0,1][0,0][1,0])|([0,1][1,0][0,1][0,0]*)":
# accepts (x, y) if x = F_i and y = L_i for i>=2
reg largest_dig msd_fib msd_fib "[0,0]*[1,1]([1,0]|[0,0])*":
# matchest largest digit of input
eval thm_c "?msd_fib Ar,s,t,u,n,x,y (($isevenlucas(r) & 
   $shiftr(r,s) & $shiftl(r,t) & $fibluc(u,t) & r>=3 
   & $saka(n,x,y) & n>=2 & s<n & n<=t) => $largest_dig(y,u))":
\end{verbatim}
Here is the meaning of the variables:
\begin{itemize}
    \item $r = L_{2i}$ for some $i \geq 1$;
    \item $s = L_{2i-1}$;
    \item $t = L_{2i+1}$;
    \item $u = F_{2i+1}$.
\end{itemize}
The last expression returns {\tt TRUE}; it checks that if $L_{2i-1} < n \leq L_{2i+1}$, then
$\varphi^{-2i}$ is the least power of $\varphi$ appearing in the
$\varphi$-representation $x.y^R$ of $n$.  This means that the leftmost
$1$ in the string $y^R$ corresponds to $F_{2i+1}$, if we considered
$y^R$ as a Zeckendorf expansion.  So the largest $1$ digit in $y^R$, when viewed as a Zeckendorf representation of $n$, corresponds to $F_{2i+1}$, which is the meaning of the implicand.
\end{proof}

We also provide a more standard, non-{\tt Walnut} proof. The proof of the first claim was shown to us by ChatGPT 5.

\begin{proof}
(a) Suppose $n \geq 1$ is an integer where the least exponent $a$ appearing in the
$\varphi$-represen\-tation is odd.
Then $a+1$ does not appear.
Apply Galois conjugation, replacing
$\varphi$ in the representation by
$\overline{\varphi}$, obtaining that $n$ is also a sum of the same powers of $\overline{\varphi}$.
Now since $a$ is odd, we have
$\overline{\varphi}^a < 0$.  Thus
the next larger positive term one could have in this new representation is $\overline{\varphi}^{a+3}$.  It 
follows that $n \leq \overline{\varphi}^a + \overline{\varphi}^{a+3} + \overline{\varphi}^{a+5} + \cdots = \overline{\varphi}^a (1-\varphi^{-2}) < 0$, a contradiction.

\medskip

(b) By induction on the (even) length of the string $y$. First, note that for the empty string $\varepsilon$, we can take $n=1=1.\varepsilon^R$. Next, for $y = 10$, we can take $n=2=10.01$. Now assume that $|y|=c\geq 4$ is even, and that $y$ starts with a $1$. 
We distinguish between three cases according to the number of $0$'s following the leading $1$ in $y$.

\textit{Case 1}: The leading $1$ in $y$ is followed by at least three $0$'s. We replace the prefix $1000$ in $y$ by $10$ to obtain a new string $y'$ with $|y'| = c-2$. 
Then by induction there is some $n'$ whose $\varphi$-representation is $x'.{y'}^R$.
Now consider 
$n = n' - (\varphi^{c-2} + \varphi^{-(c-2)}) + (\varphi^{c} + \varphi^{-c}) = n' - L_{c-2} + L_c$, which is clearly a positive integer.
By construction (note that $- \varphi^{c-2} + \varphi^{c} = \varphi^{c-1}$, which only impacts the positive part of the representation) we know that the $\varphi$-representation of $n$ has negative part $y$.

\textit{Case 2}: The leading $1$ in $y$ is followed by exactly two $0$'s. Since $|y|=c\geq 4$, we know that $y$ has the prefix $1001$. We replace it by $10$ to obtain a new string $y'$ with $|y'| = c-2$. Again, by induction there is some $n'$ whose $\varphi$-representation is $x'.{y'}^R$. 
Note that $\varphi^{-c} +  \varphi^{-(c-3)} - \varphi^{-(c-2)} = \varphi^{-(c-2)}$.
Therefore, we can consider
$n = n' + \varphi^{-(c-2)} + \varphi^{c-2} = n' + L_{c-2}$.
This is again an integer, and by construction the $\varphi$-representation of $n$ has negative part $y$.

\textit{Case 3}: The leading $1$ in $y$ is followed by exactly one $0$, i.e., $y$ has the prefix $101$. Then we can replace $101$ by $1$ to obtain $y'$, and finish as in the previous cases, simply setting $n = n' + L_c$.

\medskip

(c) Let $n \geq 2$ be an integer. By the first part we know that the smallest exponent in its $\varphi$-representation is even, say $-2i$. We need to show that $L_{2i-1} < n \leq L_{2i+1}$. 
By Galois conjugation we have $n = \varphi^{2i} \pm \dots$, where the remaining terms have exponents $\leq 2i - 2$ and all even powers occur with positive sign and all odd powers occur with negative sign.
Thus we get on the one hand
\[
    n
    \leq \varphi^{2i} + \varphi^{2i-2} + \varphi^{2i-4} + \dots
    = \varphi^{2i+1}
    = L_{2i+1} + \varphi^{-(2i+1)},
\]
which implies
\[
    n \leq L_{2i+1}.
\]
On the other hand,
\[
    n
    \geq \varphi^{2i} - \varphi^{2i-3} - \varphi^{2i-5} - \dots
    = \varphi^{2i} - \varphi^{2i-2}
    = \varphi^{2i-1}
    > L_{2i-1},
\]
as desired.
\end{proof}

So while there are expansions of integers with all exponents even, there are none with all exponents odd.  However, there are infinitely many $n$ whose expansions have all odd exponents, except for one that is even (the smallest).  The first few terms of this set are
$$ 1, 2, 5, 6, 12, 13, 16, 17, 30, 31, 34, 35, 41, 42, 45, 46, 77, \ldots .$$

\begin{theorem}
The Zeckendorf representations of those $n$ for which their $\varphi$-expansions have exactly one even exponent are accepted by the DFA
in Figure~\ref{fig7}.  Furthermore,
$n$ is in this set if and only if
$n-1$ is the (possibly empty) sum of distinct odd-indexed Lucas numbers $L_{2i+1}$ with $i \geq 0$.
\begin{figure}[H]
\begin{center}
\includegraphics[width=6.5in]{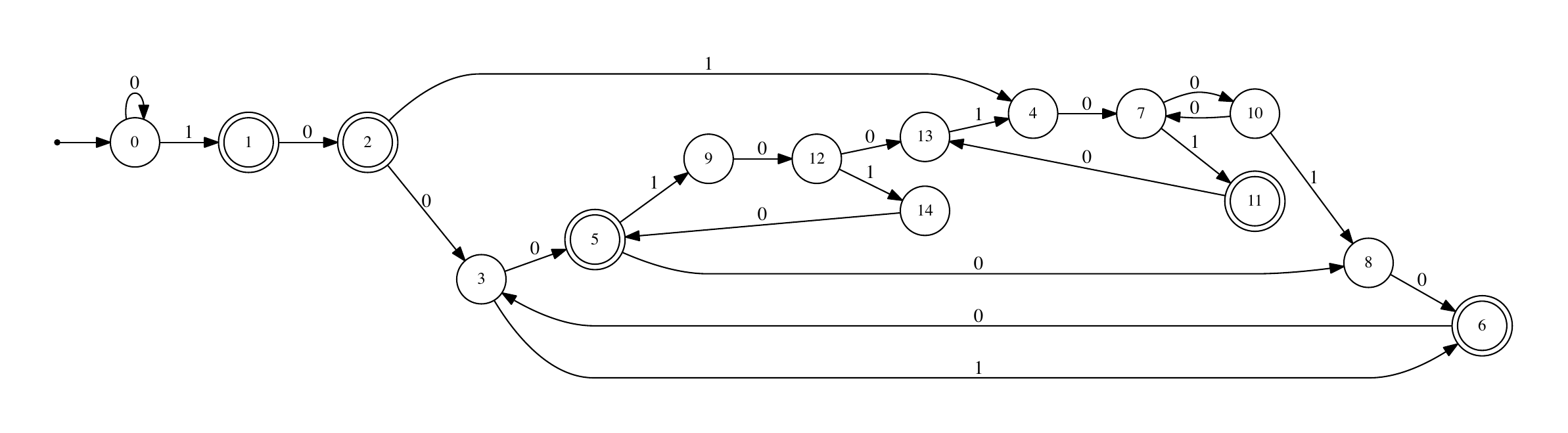}
\end{center}
\caption{Automaton accepting Zeckendorf representation of those $n$ whose $\varphi$-representation has exactly one even exponent.}
\label{fig7}
\end{figure}
\label{them7}
\end{theorem}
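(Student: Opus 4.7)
The plan is to handle the two assertions separately: first the DFA via the Walnut framework of Section~3, and then the Lucas-number characterization by a direct calculation using Galois conjugation.

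For the DFA, I would encode ``the $\varphi$-representation of $n$ has exactly one even exponent'' as a first-order predicate built from $\mathtt{saka}(n,x,y)$. In this setup the even exponents correspond to even-indexed positions of $x$ (which reads $e_t\cdots e_0$ right-to-left) and to odd-indexed positions of $y$ (which reads $\cdots e_{-2}e_{-1}$ right-to-left). Two regular expressions, one matching ``exactly one $1$ at an even position'' and one matching ``no $1$ at an even position'', can be defined and combined by: either $x$ carries the single such $1$ and $y$ has none, or vice versa. Feeding this formula to Walnut's \texttt{eval} produces the automaton, to be compared to Figure~\ref{fig7}.

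For the forward direction of the Lucas characterization, suppose $n = 1 + \sum_{j=1}^k L_{2i_j+1}$ with $i_1 < \cdots < i_k$. Applying $L_{2i+1} = \varphi^{2i+1} - \varphi^{-(2i+1)}$ together with the telescoping identity $1 = \varphi^{-1} + \varphi^{-3} + \cdots + \varphi^{-(2i_k+1)} + \varphi^{-(2i_k+2)}$ makes the negative odd powers cancel, leaving
\[
n = \sum_{j=1}^k \varphi^{2i_j+1} + \sum_{s \in T} \varphi^{-s} + \varphi^{-(2i_k+2)},
\]
with $T = \{1,3,\ldots,2i_k+1\} \setminus \{2i_1+1,\ldots,2i_k+1\}$. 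One then checks that no two exponents are adjacent (in particular $-(2i_k+1) \notin T$, so $-(2i_k+2)$ is isolated), giving the $\varphi$-representation of $n$ with a unique even exponent $-(2i_k+2)$.

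The converse requires the most care. Suppose $n$ has $\varphi$-representation with a unique even exponent; by Theorem~\ref{thm:last_index} this is the smallest exponent, say $-2i$. Let $J^+$ be the set of positive odd exponents appearing, and let $K^+$ be the absolute values of the negative odd exponents. Substituting $\varphi^j = L_j + \varphi^{-j}$ for each $j \in J^+$ and using the telescoping for $1$ as before yields
\[
n - \sum_{j \in J^+} L_j = 1 + \sum_{k \in I^+} \varphi^{-k} + \sum_{k \in I^>} \varphi^{-k} - \sum_{k \in I^-} \varphi^{-k},
\]
where $I^+ = J^+ \cap K^+$ collects overlaps, $I^- = \{1,3,\ldots,2i-1\} \setminus (J^+ \cup K^+)$ collects gaps below $2i$, and $I^>$ collects odd elements of $J^+ \cup K^+$ exceeding $2i-1$. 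Since the left side is an integer, the correction must be rational. Applying Galois conjugation, using $\overline{\varphi}^{-k} = -\varphi^k$ for odd $k$ and $\varphi^k + \varphi^{-k} = F_k\sqrt 5$, integrality forces $\sum_{k \in I^+ \cup I^>} F_k = \sum_{k \in I^-} F_k$. These are sums of distinct Fibonacci numbers with odd, hence non-adjacent, indices, so Zeckendorf uniqueness forces $I^+ = I^- = I^> = \emptyset$. Consequently $J^+$ and $K^+$ partition $\{1,3,\ldots,2i-1\}$, the correction vanishes, and $n - 1 = \sum_{j \in J^+} L_j$ is a sum of distinct odd-indexed Lucas numbers $L_{2j+1}$ with $j \leq i-1$. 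The main obstacle is precisely this step: pinning down the three exceptional sets $I^+, I^-, I^>$ and invoking Galois conjugation plus Zeckendorf uniqueness to rule them all out simultaneously.
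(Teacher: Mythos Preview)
Your outline is correct on all three fronts. The {\tt Walnut} part matches the paper's approach exactly (encode ``exactly one even exponent'' via position-parity regular expressions on the $x$ and $y$ tracks of {\tt saka}, then let {\tt Walnut} produce the automaton). Your forward direction for the Lucas characterisation is the same telescoping computation the paper carries out, just organised slightly differently.

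The one substantive difference is in the converse. The paper's argument is considerably shorter than yours: with $\mathcal{O}_{\text{pos}}=J^+$ it simply sets $n'=1+\sum_{j\in J^+}L_j$, writes
\[
x-n'=-1+\sum_{k\in \mathcal{A}_{\text{neg}}}\varphi^{k}+\sum_{k\in -J^+}\varphi^{k},
\]
observes that each of the two sums is a finite sum of negative, pairwise non-adjacent powers of $\varphi$ and hence lies strictly in $(0,1)$, and concludes $|x-n'|<1$, so $x=n'$. No Galois conjugation, no auxiliary sets $I^+,I^-,I^>$, no appeal to uniqueness of Fibonacci sums. Your route via conjugation does work: the key identity $\sum_{k\in I^+\cup I^>}F_k=\sum_{k\in I^-}F_k$ is correct, and since the two index sets are disjoint subsets of the odd positive integers, the super-increasing property $F_{1}+F_{3}+\cdots+F_{2k-1}=F_{2k}<F_{2k+1}$ forces both to be empty. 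One small quibble: calling this ``Zeckendorf uniqueness'' is a slight abuse, because index $1$ may occur together with index $3$, and replacing $F_1$ by $F_2$ would create an adjacency; the clean justification is the super-increasing property of odd-indexed Fibonacci numbers just mentioned. What your longer argument buys is the extra structural information that $J^+$ and $K^+$ partition $\{1,3,\ldots,2i-1\}$, which the paper's $|x-n'|<1$ trick does not make explicit.
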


\begin{proof}
We use {\tt Walnut}. 
To create the automaton for the Zeckendorf representations of suitable $n$, recall that ${\tt saka}$ checks whether $n = x.y^R$, where the last position of the binary string $x$ corresponds to the exponent $0$, and the last position of the binary string $y$ corresponds to the exponent $-1$. 
Thus, $n$ having exactly one even exponent in its $\varphi$-representation corresponds to one of the following two cases:
\begin{itemize}
	\item $n$ has one non-negative even exponent and no negative even exponents (which only occurs for $n = 1$). If we say that the last position of $x$ is ``position $1$'', then this corresponds to $x$ having exactly one $1$ at an odd position (and even positions don't matter) and $y$ having no $1$'s at even positions (and odd positions don't matter).
	\item $n$ has no non-negative even exponents and exactly one negative even exponent. This corresponds to $x$ not having any $1$'s at odd positions and $y$ having exactly one $1$ at an even position.
\end{itemize}

\begin{verbatim}
reg noodd1 {0,1} "(()|0)((0|1)0)*":
# no 1 in an odd position from the end (end is position 1); 
# even positions don't count
reg noeven1 {0,1} "()|((()|0)((0|1)0)*(0|1))":
# no 1 in an even position from the end (end is position 1); 
# odd positions don't count
reg oneodd1 {0,1} "(()|0|1)(0(0|1))*1((0|1)0)*":
# has exactly one 1 in an odd position from the end (end is position 1); 
# even positions don't count
reg oneeven1 {0,1} "(()|0|1)(0(0|1))*1((0|1)0)*(0|1)":
# has exactly one 1 in an even position from the end (end is position 1); 
# odd positions don't count
def one_even_exponent "?msd_fib ($oneodd1(x) & $noeven1(y)) | 
   ($noodd1(x) & $oneeven1(y))":
def thm9 "?msd_fib Ex,y $saka(n,x,y) & $one_even_exponent(x,y)":
\end{verbatim}
This produces the automaton in
Figure~\ref{fig7}.  

For the second claim, we need an automaton that converts a representation as a sum of Lucas numbers into its equivalent Zeckendorf representation.  To do this, for a binary string
$b_t \cdots b_0$ we define
$[b_t \cdots b_0]_L = \sum_{0 \leq i \leq t} b_i L_i$.
The automaton ${\tt luctofib}$ will accept $(x,y)$ if $[x]_L = [y]_F$ (where, as usual, $x$ and $y$ are numerical values interpreted as the binary strings corresponding to their Zeckendorf representation). 
Note that $L_i = F_{i-1} + F_{i+1}$ and 
that the last digit in the Lucas number representation corresponds to $L_0$ whereas the last digit in the Fibonacci representation corresponds to $F_2$.
Therefore, a $1$ in the Lucas number representation corresponds to a $1$ shifted to the right by one position and another $1$ shifted to the right by three positions. 
Moreover, we need to be careful with the last three positions: $L_0 = 2, L_1 = 1 = F_2, L_2 = 3 = F_1 + 2$. 
Therefore, we need to check if the last or the third bit from the end in $y$ is $1$, and adjust accordingly.

\begin{verbatim}
reg end1 {0,1} "(0|1)*1":
# has a 1 at the last position
def hasbit3 "?msd_fib Et,u $shiftr(x,t) & $shiftr(t,u) & $end1(u)":
# has a 1 in 3rd bit from end
def bit1 "?msd_fib (y=2 & $end1(x))|(y=0 & ~$end1(x))":
# accepts (x,y) if y will "correct" the value of x for position 1
def bit3 "?msd_fib (y=1 & $hasbit3(x))|(y=0 & ~$hasbit3(x))":
# accepts (x,y) if y will "correct" the value of x for position 3
def luctofib "?msd_fib Et,u,v,w,z $shiftr(x,t) & $shiftr(t,u) & 
   $shiftr(u,v) & $bit1(x,w) & $bit3(x,z) & y=t+v+w+z":
def sum_odd_lucas "?msd_fib Ey $noodd1(y) & $luctofib(y,n)":
eval thm9b "?msd_fib An $sum_odd_lucas(n-1) <=> $thm9(n)":
\end{verbatim}
And {\tt Walnut} returns {\tt TRUE}.
\end{proof}

We also provide a more ``standard'' proof of the second claim.

\begin{proof}
The statement of the lemma is clear for $n = 1$, so let us only consider integers larger than $1$.

First assume that $n-1$ is the sum of distinct odd-indexed Lucas numbers
\[
    n - 1 = \sum_{i \in I} L_{2i + 1},
\]
where $I$ is some finite set of non-negative integers.
Then since $L_k = \varphi^k - \varphi^{-k}$ for odd $k$, we have 
\begin{equation}\label{eq:n_oddLucasSum}
    n = \left(\sum_{i \in I} \varphi^{2i + 1} \right) 
    + 1 - \sum_{i \in I} \varphi^{-(2i+1)}.
\end{equation}
Note that since $\varphi^{k} = \varphi^{k-1} + \varphi^{k-2}$ for all $k$, we have for $i \geq k$
\[
    \varphi^{2k} - \varphi^{-(2i+1)}
    = \varphi^{2k-1} + \varphi^{2k-3} + \dots +\varphi^{-(2i - 1)} + \varphi^{- (2i+2)}.
\]
We can apply this formula repeatedly (starting with $k=0$) in \eqref{eq:n_oddLucasSum} to see that the $\varphi$-representation of $n$ indeed only contains odd powers except for the smallest one.

Now assume conversely that the integer $x>1$ has exactly one even exponent in its $\varphi$-representation. 
By Theorem~\ref{thm:last_index} it must be the smallest exponent, and so in particular all positive exponents are odd, and $\varphi^0$ does not appear.
Let ${\cal O}_{\text{pos}}$ be the set of (odd) positive exponents in the $\varphi$-representation of $x$, and ${\cal A}_{\text{neg}}$ be the set of negative exponents. Consider the number
\[
    n = 1 + \sum_{k \in {\cal O}_{\text{pos}}} L_k.
\]
Then using $L_k = \varphi^k - \varphi^{-k}$ for odd $k$, we get
\begin{align*}
    x - n
    &= \sum_{k \in {\cal O}_{\text{pos}}} \varphi^{k} 
        + \sum_{k \in {\cal A}_{\text{neg}}} \varphi^{k}
    - \left( 1 + \sum_{k \in {\cal O}_{\text{pos}}} \varphi^k
    - \sum_{k \in -{\cal O}_{\text{pos}}} \varphi^k
    \right)\\
    & = -1 + \sum_{k \in {\cal A}_{\text{neg}}} \varphi^{k}
    + \sum_{k \in - {\cal O}_{ \text{pos}}} \varphi^{k}.
\end{align*}
By the properties of $\varphi$-representations, each of the two sums lies strictly between $0$ and $1$, and therefore $|n-x| < 1$. 
Thus, $x$ is an integer if and only if $x = n$, which is of the desired shape. 
\end{proof}

We might also examine those $n$
having exactly one odd exponent in their $\varphi$-expansion. The first
few elements of this set are
$$ 2, 9, 20, 27, 49, 56, 67, 74, 125, 132, 143, 150, 172, 179, 190, 197, \ldots .$$
For example,
\begin{align*}
    2 &= \varphi^1 + \varphi^{-2} \\
    9 &= \varphi^4 + \varphi^1 + \varphi^{-2} + \varphi^{-4} \\
    20 &= \varphi^6 + \varphi^1 +
    \varphi^{-2} + \varphi^{-6} .
\end{align*}
Examining these expansions suggest the following result.
\begin{theorem}
The $\varphi$-expansion of a natural number $n$ contains exactly one odd exponent if and only if $n$ is accepted
by the Fibonacci DFAO in
Figure~\ref{fig8}. Furthermore
a number $n$ has this property
if and only if $n-2$ is the (possibly empty) sum of
distinct even-indexed Lucas numbers
$L_{2i}$ with $i \geq 2$.  Finally, for these $n$, the odd exponent is equal to $1$.
\begin{figure}[H]
\begin{center}
\includegraphics[width=6.5in]{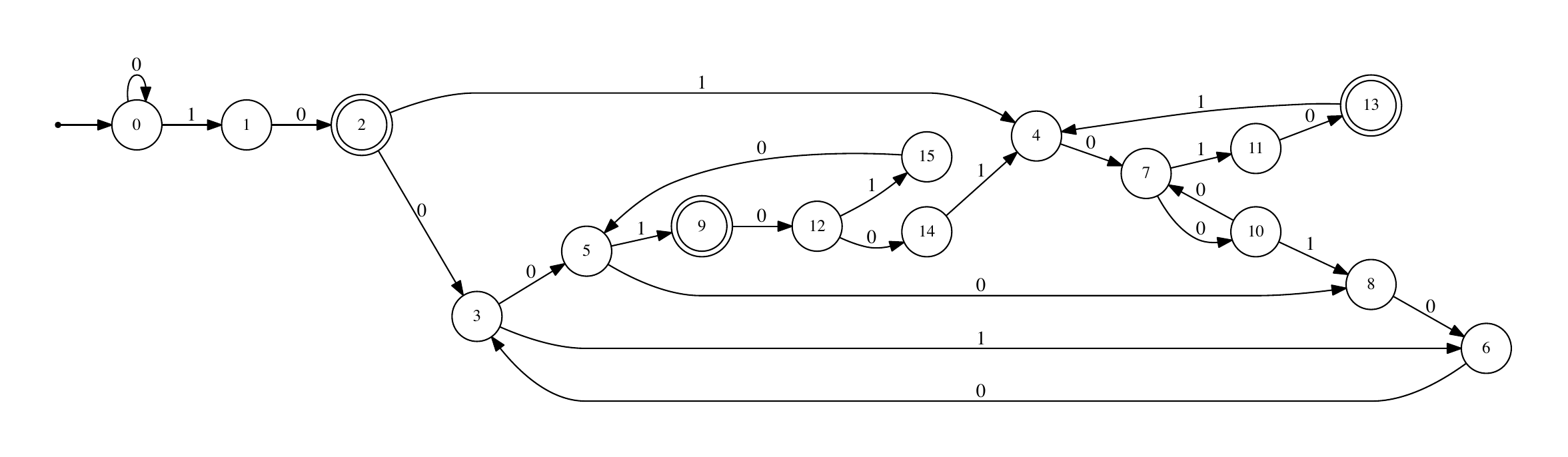}
\end{center}
\caption{Automaton accepting Zeckendorf representation of those $n$ whose $\varphi$-representation has exactly one odd exponent.}
\label{fig8}
\end{figure}
 \end{theorem}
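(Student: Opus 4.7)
The plan is to mirror the two-step structure of the proof of Theorem~\ref{them7}. For the Walnut portion, it suffices to interchange the roles of ``odd position'' and ``even position'' in the four regular expressions {\tt oneodd1}, {\tt oneeven1}, {\tt noodd1}, {\tt noeven1}: recall that in {\tt saka}, position $1$ of $x$ corresponds to exponent $0$, so an odd position in $x$ encodes an even exponent and vice versa. Combined with {\tt saka}, this produces the DFA of Figure~\ref{fig8}. For the Lucas-sum characterization, I would mimic {\tt sum\_odd\_lucas} and build, using {\tt luctofib}, an automaton that accepts precisely those integers expressible as a sum of distinct Lucas numbers $L_{2i}$ with $i \geq 2$.

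The forward direction of the standard proof is a direct calculation. If $n - 2 = \sum_{i \in I} L_{2i}$ with $I \subseteq \{2, 3, \ldots\}$, then using $L_{2i} = \varphi^{2i} + \varphi^{-2i}$ together with the identity $2 = \varphi + \varphi^{-2}$ recorded in the example near the beginning of the paper, one obtains
\[
    n = \varphi^{1} + \varphi^{-2} + \sum_{i \in I}\bigl(\varphi^{2i} + \varphi^{-2i}\bigr).
\]
The exponent set $\{1,-2\} \cup \{\pm 2i : i \in I\}$ is finite and any two of its elements differ by at least $2$, so this is a valid $\varphi$-representation; by uniqueness it is the $\varphi$-representation of $n$, whose sole odd exponent is $1$.

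The reverse direction is the substantive part. Assume $n \geq 2$ has exactly one odd exponent $k$ in its $\varphi$-representation. Applying Galois conjugation to $n = \bar n$ and using the identities $\varphi^{i} - (-1)^{i} \varphi^{-i} = F_i \sqrt{5}$ (for all integers $i$) and $F_{-i} = (-1)^{i+1} F_i$, one extracts the integrality constraint
\[
    F_{|k|} + \sum_{\substack{i > 0 \\ i \text{ even}}} (e_i - e_{-i})\, F_i \;=\; 0.
\]
The non-adjacency rule of the $\varphi$-representation forces $e_{|k|-1} = e_{|k|+1} = 0$, which asymmetrically constrains the two coefficients at indices $|k|\pm 1$ (they are forced into $\{-1,0\}$ or $\{0,1\}$ depending on the sign of $k$). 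A Zeckendorf-style ``largest term dominates'' argument, exploiting $F_{2j+2} > \sum_{\ell=1}^{j} F_{2\ell}$, then shows that this identity has no solution when $|k| \geq 3$, and a short separate check rules out $k = -1$. Hence $k = 1$, and the same identity forces $e_{-2} = 1$, $e_2 = 0$, and $e_{-2i} = e_{2i}$ for all $i \geq 2$. Regrouping the resulting antipalindromic even part yields $n = \varphi + \varphi^{-2} + \sum_{i \in I}(\varphi^{2i} + \varphi^{-2i}) = 2 + \sum_{i \in I} L_{2i}$ with $I = \{i \geq 2 : e_{2i} = 1\}$, as required.

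The main obstacle is the step forcing $k = 1$. A naive ``closeness to an integer'' estimate (the technique used in the reverse direction of Theorem~\ref{them7}) does not work here, because $\varphi^{k}$ for $|k| \geq 3$ would be too large to be cancelled by the bounded contribution from the negative-exponent tail. The Galois-conjugation route circumvents this by reducing integrality to a rigid Fibonacci identity whose solvability is tightly governed by the rapid growth of the even-indexed subsequence $\{F_{2j}\}_{j \geq 1}$.
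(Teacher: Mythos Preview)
Your Walnut outline matches the paper's proof essentially verbatim: the paper swaps the roles of the parity regexes to define \texttt{one\_odd\_exponent} and \texttt{thm8}, then builds \texttt{sum\_even\_lucas} from \texttt{luctofib} (adding the small technical wrinkle you allude to, namely forcing the bits for $L_0$ and $L_2$ to be zero via \texttt{bit1} and \texttt{bit3}), and finally verifies the ``odd exponent equals $1$'' claim with a separate one-line check. So on the mechanized side you have reproduced the paper's argument.

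Where you diverge is in supplying a human proof of the Lucas-sum characterization; the paper gives none for this theorem (unlike Theorem~\ref{them7}). Your approach is also different in spirit from the standard proof of Theorem~\ref{them7}: there the paper builds a candidate integer and bounds $|x-n|<1$, whereas you correctly observe that this fails here and instead pass to the Galois-conjugate identity $\sum_i e_i F_i = 0$. That reduction is sound, and the resulting constraint $F_{|k|} + \sum_{i>0,\ i\ \text{even}} (e_i - e_{-i})F_i = 0$ is exactly right. The case analysis you sketch does go through: writing $|k|=2m+1$ and using $\sum_{j\le m}F_{2j}=F_{2m+1}-1$ together with the sign restriction on $c_m,c_{m+1}$ coming from $e_{k\pm 1}=0$, one checks that the largest index $J$ with $c_J\neq 0$ can be neither $\le m$, nor $m+1$, nor $\ge m+2$, and the two cases $k=\pm 1$ fall out as you say. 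One small slip: you write $e_{|k|\pm 1}=0$, but non-adjacency gives $e_{k\pm 1}=0$; it is only after translating to the coefficients $c_j=e_{-2j}-e_{2j}$ that the constraint lands at indices $(|k|\pm 1)/2$, with the sign of $k$ determining which half of $\{-1,0,1\}$ is excluded. With that corrected, your argument is complete and gives, as a byproduct, the third claim of the theorem without a separate Walnut call.
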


\begin{proof}
The proof of the first two claims mirrors the proof of Theorem~\ref{them7}.  Here is the {\tt Walnut} code:
\begin{verbatim}
def one_odd_exponent "?msd_fib ($oneeven1(x) & $noodd1(y)) |
   ($noeven1(x) & $oneodd1(y))":
def thm10 "?msd_fib Ex,y $saka(n,x,y) & $one_odd_exponent(x,y)":
\end{verbatim}
This creates the automaton in Figure~\ref{fig8}.

The second claim can be handled similarly, but there is one minor difficulty, since the theorem specifies we can only use even indexes $2i$ for $i \geq 2$.  In other words, the last bit and third-to-last bit of the Lucas representation must be $0$.  We handle this with the following {\tt Walnut} code, reusing some automata from earlier in the paper.
\begin{verbatim}
def sum_even_lucas "?msd_fib Ey $noeven1(y) & $bit1(y,0) & $bit3(y,0) & 
   $luctofib(y,n)":
eval thm10b "?msd_fib An $sum_even_lucas(n-2) <=> $thm10(n)":
\end{verbatim}
And {\tt Walnut} returns {\tt TRUE}.

For the last claim, we use the following {\tt Walnut} code:
\begin{verbatim}
reg fibmatch msd_fib msd_fib "([0,0]|[1,0])*[1,1]([0,0]|[1,0])*":
# accepts x and y if the Fibonacci number y 
# occurs in the Zeckendorf expansion of x
eval thm10c "?msd_fib An $thm10(n) => Ex,y $saka(n,x,y) & $fibmatch(x,2)":  
\end{verbatim}
It produces {\tt TRUE} once again.
\end{proof}

Finally, we can also examine those integers having exactly two odd exponents in their base-$\varphi$ expansion.

\begin{theorem}
If $n$ has exactly two odd exponents in its base-$\varphi$ expansions, then these exponents are either
$3$ and $1$, or
$2i+1$ and $1-2i$ for some $i \geq 1$.  Furthermore, each possibility occurs.
\end{theorem}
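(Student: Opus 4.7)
The plan is to combine Galois conjugation with a signed Fibonacci representation argument, then exhibit explicit witnesses for the realization.

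Applying Galois conjugation $\varphi \mapsto \bar\varphi = -\varphi^{-1}$ and using $\bar n = n$ together with $\bar\varphi^k = (-1)^k \varphi^{-k}$, the integrality of $n = \varphi^p + \varphi^q + \sum_{k \in E} \varphi^k$ (with $E$ denoting the set of even exponents) reduces, after dividing by $\sqrt 5$, to
\[
F_p + F_q + \sum_{k \in E} F_k = 0,
\]
where we use extended Fibonacci numbers with $F_{-k} = (-1)^{k+1} F_k$. Theorem~\ref{thm:last_index} then rules out both $p, q < 0$, leaving Case~A ($p, q > 0$) and Case~B ($p > 0 > q$).

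For the realization part I would exhibit explicit witnesses: $n = 6 = \varphi^3 + \varphi + \varphi^{-4}$ for the pair $\{3, 1\}$; $n = 5 = \varphi^3 + \varphi^{-1} + \varphi^{-4}$ for $\{3, -1\}$ (the $i = 1$ case); and
\[
n = \varphi^{2i+1} + \varphi^{2i-2} + \varphi^{1-2i} + \varphi^{-(2i+2)}
\]
for each $i \geq 2$. The four exponents are pairwise non-adjacent (all differences are $\geq 3$ when $i \geq 2$, with $4i-3 \geq 5$ being the smallest), and the Galois identity $F_{2i-2} + F_{2i-1} = F_{2i}$ (equivalently, $F_{2i+2} - F_{2i-2} = L_{2i}$) yields integrality.

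The classification is the main step. I would recode $E$ via coefficients $c_k = [-k \in E] - [k \in E] \in \{-1, 0, 1\}$ for each positive even $k$, rewriting the Galois identity as $\sum_k c_k F_k = F_p + F_q$. The no-adjacency rule of the $\varphi$-representation translates into: $c_k \neq -1$ for $k \in \{p-1, p+1, q-1, q+1\} \cap \mathbb Z_{>0}$ in Case~A, with additional constraints $c_k \neq +1$ for $k \in \{1-q, -q-1\} \cap \mathbb Z_{>0}$ in Case~B. For the allowed pairs the target is realized by $c_4 = 1$ (for $\{3,1\}$) and by $c_{2i+2} = 1,\ c_{2i-2} = -1$ (for $\{2i+1, 1-2i\}$ with $i \geq 2$; just $c_4 = 1$ when $i = 1$), each easily checked to avoid all forbidden positions.

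The hard part is showing that no pair outside the stated list admits a valid $(c_k)$. The natural rewriting $F_p = F_{p+1} - F_{p-1}$ (for odd $p$) always places a $-1$ at the forbidden index $p-1$ when $p \geq 5$ in Case~A; analogously, in Case~B with $a \neq c$, the natural decomposition of $F_{2a+1} + F_{2c-1}$ forces $c_{2a} = -1$ or $c_{2c} = +1$, both forbidden. Ruling out alternative signed representations relies on the joint size bounds $\varphi^T \leq n < \varphi^{T+1}$ and $L_{M-1} < n \leq L_{M+1}$ (the latter from Theorem~\ref{thm:last_index}), which together force $T \leq M \leq T + 2$ (with $T$ the largest exponent and $-M$ the smallest). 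A case analysis on whether $T = p$ or $T$ lies in the positive even part $E^+$, combined with bounding the achievable values of $\sum_{k \in |E^-|} F_k - \sum_{k \in E^+} F_k$ via the recurrence $F_{2k+2} = 3F_{2k} - F_{2k-2}$ for even-indexed Fibonaccis, then shows that the target $F_p + F_q$ falls in a gap of achievable values unless $(p,q)$ is in one of the two listed families.
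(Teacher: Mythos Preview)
Your approach via Galois conjugation is genuinely different from the paper's, which proves the theorem entirely by machine verification in {\tt Walnut}. The reduction to the Diophantine condition $F_p+F_q+\sum_{k\in E}F_k=0$ is correct, and your explicit witnesses for the realisation claim are fine (and nicer than the paper's, which again just invokes {\tt Walnut}).

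However, the classification half of your argument has real gaps. First, the appeal to Theorem~\ref{thm:last_index} does not by itself exclude the case $p,q<0$: that theorem only says the \emph{smallest} exponent is even, and when both odd exponents are negative the smallest exponent can still be an even element of $E$ lying below them. You need a separate argument here. Second, your recoding $c_k=[-k\in E]-[k\in E]$ loses information: when both $k$ and $-k$ lie in $E$ you get $c_k=0$, yet $k\in E$ still violates adjacency with $p$ if $k=p\pm1$. So ``$c_k\neq -1$'' is necessary but not sufficient for the adjacency constraint, and your later gap argument is phrased entirely in terms of the $c_k$.

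Most seriously, the final paragraph---the ``hard part''---is not a proof but a plan. You assert that ``a case analysis\ldots\ combined with bounding the achievable values of $\sum_{k\in|E^-|}F_k-\sum_{k\in E^+}F_k$'' shows the target falls in a gap, but no such analysis is carried out, and the single recurrence $F_{2k+2}=3F_{2k}-F_{2k-2}$ does not by itself control which sums are achievable under your (incomplete) adjacency constraints. The signed-Zeckendorf-type representations of $F_p+F_q$ with forbidden positions are not unique, and you have not shown that every alternative representation is blocked. Until that case analysis is actually written out---with the corrected constraints and including the $p,q<0$ case---the classification direction remains unproven.
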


\begin{proof}
We use {\tt Walnut}.
\begin{verbatim}
reg twoodd1 {0,1} "(()|0|1)(0(0|1))*1(0|1)(0(0|1))*1((0|1)0)*":
# has exactly two 1's in an odd position from the end
reg twoeven1 {0,1} "(()|0|1)(0(0|1))*1(0|1)(0(0|1))*1((0|1)0)*(0|1)":
# has exactly two 1's in an even position from the end

def two_odd_exponents "?msd_fib ($oneeven1(x) & $oneodd1(y)) | 
   ($twoeven1(x) & $noodd1(y)) | ($noeven1(x) & $twoodd1(y))":
def thm11 "?msd_fib Ex,y $saka(n,x,y) & $two_odd_exponents(x,y)":

reg isevenfib msd_fib "0*1(00)*":

def fib3 "?msd_fib Eb,c $isevenfib(a) & $shiftl(a,b) & 
   $shiftl(b,c) & $shiftl(c,d)":
# does (a,d) = (F_{2i}, F_{2i+3})?

def thm11_match "?msd_fib $fib3(a,d) & $fibmatch(x,d) & $fibmatch(y,a)":
# accepts (a,d,x,y) if a = F_{2i}, d = F_{2i+3} and
# F_{2i+3} shows up in x and F_{2i} shows up in y
   
eval thm11b "?msd_fib An $thm11(n) =>  Ex,y $saka(n,x,y) & (($fibmatch(x,5) 
   & $fibmatch(x,2)) | Ea,d $thm11_match(a,d,x,y))":
# if there are two odd exponents for n
# then they are (3,1) or (2i+1,1-2i) for some i
\end{verbatim}
And {\tt Walnut} returns TRUE.

To complete the proof we need to show that each of the possibilities of the two odd exponents $(3,1)$ and
$(2i+1,1-2i)$ for $i \geq 1$ occurs.
For $(3,1)$ we just observe that
$6 = \varphi^3 + \varphi^1 + \varphi^{-4}$.  For the other exponents we use the following {\tt Walnut} code:
\begin{verbatim}
eval thm11c "?msd_fib Aa,d $fib3(a,d) => En,x,y $saka(n,x,y) & 
   $two_odd_exponents(x,y) & $fibmatch(x,d) & $fibmatch(y,a)":
# for each (2i+1,1-2i) there is an n having exactly two odd exponents
# with these two
\end{verbatim}
And {\tt Walnut} returns {\tt TRUE}. 
\end{proof}

\section*{Acknowledgments}

We thank Michel Dekking for pointing out some flaws in a previous version.

\end{document}